%review of BNgraphAG incorporating referees and converted to the Contemporary Math format
\documentclass[a4paper,11pt]{amsproc}
\usepackage[english]{babel}
\usepackage[leqno]{amsmath}
\usepackage{amssymb,amsthm}
\usepackage{amscd}
\usepackage{enumerate}
\usepackage[cmtip,all]{xy}

\newtheorem{thm}{Theorem}[section]

\newtheorem{prop}[thm]{Proposition}

\newtheorem{claim}[thm]{Claim}

\newtheorem{fact}[thm]{Fact}
\newtheorem{conj}[thm]{Conjecture}
\newtheorem{prob}[]{Problem}

\theoremstyle{definition}
\newtheorem{defi}[thm]{Definition}
\theoremstyle{remark}
\newtheorem{remark}[thm]{\bf{Remark}}
\newtheorem{nota}[thm]{\bf}

\numberwithin{equation}{section}

\newtheorem{example}[thm]{Example}

\newcommand{\la}{\longrightarrow}
\newcommand{\ha}{\hookrightarrow}

\newcommand{\ov}{\overline}

\newcommand{\Div}{\operatorname{Div}}
\newcommand{\dv}{\operatorname{div}}

\newcommand{\Spec}{\operatorname{Spec}}

\newcommand{\Pic}{\operatorname{Pic}}
\newcommand{\Tw}{\operatorname{Tw}}
\newcommand{\Jac}{\operatorname{Jac}}

\newcommand{\Prin}{\operatorname{Prin}}

\newcommand{\Picphi}{\operatorname{Pic}_{\phi}}

\newcommand{\Picphir}{\operatorname{Pic}_{\phi^*}}

\newcommand{\mdeg}{\underline{\operatorname{deg}\thinspace}}

\def\e{\epsilon}

\def\L{\mathcal L}
\def\O{\mathcal O}

\def\X{\mathcal X}
\def\Y{\mathcal Y}
\newcommand{\Z}{\mathbb{Z}}

\def\ZZ{\mathcal Z}
\newcommand{\G}{\Gamma }

%POSETS

%GRAPH

\def\md{\underline{d}}

%TROPICAL

\newcommand{\Mgt}{{M_g^{\rm trop}}}

\newcommand{\Mgb}{\ov{M_g}}

%BNCOMB
\newcommand{\lp}{\operatorname{loop}}
\newcommand{\hG}{\widehat{\Gamma}}

\newcommand{\rs}{r^{\#}}

\begin{document}
\title{Algebraic and combinatorial Brill-Noether theory.}
\author{Lucia Caporaso}
 \address{Dipartimento di Matematica,
 Universit\`a Roma Tre,
 Largo S. Leonardo Murialdo 1,
 00146 Roma (Italy)}
 \email{caporaso@mat.uniroma3.it}
%\keywords{}
\subjclass[2000]{14H51, 05CXX}

\date{}

\begin{abstract} The interplay between algebro-geometric and combinatorial Brill-Noether theory is studied. The Brill-Noether locus $W^r_d(\G)$ of a genus-$g$ (non-metric) graph $\G$ is shown to be non-empty if the Brill-Noether number $\rho^r_g(G)$ is non-negative, as a consequence of the analogous fact for smooth projective curves.  Similarly, the existence of a graph $\G$ for which $W^r_d(\G)$ is empty implies the emptiness of  $W^r_d(C)$ for a general curve $C$ of genus $g$.
The main tool is a refinement of  Baker's Specialization Lemma.
\end{abstract}

 \maketitle

\tableofcontents
\section{Introduction}
In this paper we investigate the interplay  between the divisor theory on algebraic curves and the divisor theory on finite graphs. Recent progress in combinatorics shows that   the analogies between the two fields are quite strong.  For divisors on graphs 
there are notions of principal divisors, linear equivalence, degree and rank, canonical class, 
  Brill-Noether loci;  the corresponding basic theory  has the same shape as   for algebraic curves, and some fundamental facts, such as the 
the Riemann-Roch formula and the Clifford inequality, hold.

We focus on some classical   theorems in Brill-Noether theory,
and their analogs for graphs. 
Precise statements    for what follows
 can be found at the beginning of Section~\ref{BNsec}.
The two basic algebro-geometric theorems  are the Existence Theorem, \cite{kempf}, \cite{KL1},  
stating that if the Brill-Noether number $\rho^r_d(g)$ is non-negative, then the variety $W^r_d(C)$ is non-empty
for every smooth projective curve $C$ of genus $g$;
and the Brill-Noether Theorem, \cite{GH},
according to which if
$\rho^r_d(g)$ is negative, then $W^r_d(C)$ is empty
for a general smooth projective curve $C$ of genus $g$. 
 
 It is thus quite natural to ask whether the same results hold for graphs.
 As far as we know, the first place where these issues have been explicitly raised is M. Baker's paper
  \cite{bakersp}. One of the main goals of that paper was to prove a remarkable result, called the 
 ``Specialization Lemma", which is somewhat technical to be explained in this introduction
  (see Section~\ref{specsec}),
 but  which can be applied to connect the Brill-Noether theory of curves  to the Brill-Noether theory of graphs, as explained   in \cite{bakersp}.
  As it turns out, the applications of the Specialization Lemma  work better for metric graphs, 
 or tropical curves, rather than for
 ordinary graphs. For instance, one of the most striking  is the fact that   the  Brill-Noether Theorem for curves follows from the existence of  one tropical curve of genus $g$ for which $W^r_d$ is empty
 (such a curve is constructed  in \cite{CDPR}).

We are in this paper interested in the Brill-Noether theory of graphs (with no metric).
 Our first step is thus to strengthen the Specialization Lemma so as to make it applicable for us.
 This consists in extending it 
 from strongly semistable, regular curves over   discrete valuation rings (as assumed in \cite{bakersp}), 
 to all one-parameter families of curves
 with regular total space and nodal singular fibers;
 and from divisors
  defined on the total space (as   in \cite{bakersp}), to   families of divisors  (i.e. sections of the Picard scheme) not necessarily gluing to a globally defined divisor.
We do that in Section~\ref{specsec} and treat a more refined version for
graphs with loops in Section~\ref{loopsec}.
 
 Then we use our refined version of the Specialization Lemma, together with the   Existence Theorem for curves, to prove   the Existence Theorem for graphs:
 see Theorem~\ref{BN+}.

 We conclude the paper with a discussion on the Brill-Noether Theorem for graphs,
 noticing, in Proposition~\ref{BNBN}, that the classical Brill-Noether Theorem for curves follows from the
 existence of a (non-metric) graph for which $W^r_d$ is empty (again, this is known thanks to
 \cite{CDPR}). 
 We also include some speculations about which graphs   are Brill-Noether general,
 i.e. have an empty $W^r_d$ whenever $\rho^r_d(g)<0$.
 There are several examples of graphs that are not Brill-Noether general, and it would be interesting to have  a  classification of them, even only for $3$-regular ones. This last problem also relates to the recently very active area of research relating moduli spaces of algebraic curves, moduli spaces  of tropical curves, and Berkovich spaces; see \cite{BPR} for example.
 There is a direct correspondence between the moduli spaces of tropical curves and of algebraic curves,
 based only on the underlying (non-metric) graphs;
 see   \cite[Th, 4.2.1]{Ctrop}. It would be interesting to understand how the
 Brill-Noether theory fits in with this correspondence.
 
The paper is organized as follows. Sections~\ref{agsec} and  \ref{divsec} recall some useful definitions and results from Algebraic Geometry (Section~\ref{agsec}) and Graph Theory (Section~\ref{divsec})
and contain no original results. In Section~\ref{specsec} we prove a first refinement of the Specialization Lemma,  Proposition~\ref{spe}. In Section~\ref{loopsec} we prove a second refinement using a more precise definition of rank, taking loops into account; see Proposition~\ref{speloop}. In Section~\ref{BNsec} we prove the Existence Theorem for graphs, and further discuss the interplay between algebraic and combinatorial Brill-Noether theory.

\
  
It is my pleasure to express my gratitude to Sam Payne, for some comments and questions
out of  which this paper grew. I also wish to thank Dan Abramovich and Matt Baker for some
useful remarks, and the referees for their accurate reports.

\section{Algebro-geometric preliminaries.}
\label{agsec}
\begin{nota}
{\it Algebraic curves.}
Unless we specify otherwise, we work over an algebraically closed field $k$;
the word ``point" stands for  closed point;
the word ``curve" stands for connected, reduced, projective one-dimensional scheme defined over $k$.

A nodal curve is a curve having at most nodes as singularities.

Let $X_0$ be a nodal curve.
We denote by $\G$ the dual graph of $X_0$, 
so that its vertex set, $V(\G)$,   is identified with the set of    irreducible components of $X_0$, and its edge set,  $E(\G)$,   is identified with the set of nodes of  $X_0$, with an edge joining two (possibly equal) vertices  if the  corresponding (possibly equal) components  intersect at the corresponding node. Note that $\G$ is an ordinary graph
(no orientation,    metric, or weight function).
We denote by
\begin{equation}
\label{irred}
X_0=\bigcup_{v\in V(\G)}C_v
\end{equation}
the decomposition of $X_0$ into irreducible components.

For a Cartier divisor $D$ (or a line bundle $L$) on $X_0$, the multidegree $\mdeg  D$ is
$$
\mdeg D:=\{\deg_{C_v}D\}_{v\in V(\G)}\in \Z^{V(\G)}
$$
where $\deg_{C_v}D$ is the degree of the restriction of $D$ to $C$. We denote $r(X_0,D):=h^0(X_0,D)-1$.
\end{nota}
\begin{nota}{\it Picard scheme.}
  \label{neron}
 Details about what follows may be found in \cite{BLR}.
  Let $\phi:\X\to B$ be a family of generically smooth curves, i.e. a projective morphism whose fibers are curves, such that $B$ contains a dense open subset, $B^*\subset B$,
  over which the fibers of $\phi$ are smooth. For $b\in B$ we denote $X_b:=\phi^{-1}(b)$.
  We assume $B$ smooth and irreducible for simplicity.
 We denote  by 
 $\phi^*:\X^*\to B^*$ the restriction of $\phi$ over $B^*$.
We have the associated (relative)  Picard scheme
$$
 \pi:\Picphi \la B.
$$
The notation $\Pic_{\X/B}$ is often  used for what we here denote by $\Picphi$; our notation, almost the same  as the one used in \cite{gac}, is more convenient for our purposes.
Denote by $\pi^0:\Picphi^0 \to B$    the (relative) Jacobian. 
So,  the fiber of $\pi$ over $b\in B$ is $\Pic X_b$ and the fiber of $\pi^0$ is 
 $\Pic^0 X_b$, the generalized Jacobian  of $X_b$, denoted often by $\Jac X_b$. Recall that for $b\in B^*$ we have
 $$
 \Pic^0 X_b=\{L\in \Pic X_b: \deg L=0\}.
 $$

Now, a ``pathology"
of the Picard scheme is that 
the morphism  $\Picphi^d\to B$ is not separated  if $\phi$ admits reducible fibers
 (see  below).
It is thus desirable to have  a separated  model  for $\Picphir^d\to B^*$ over $B$.
  \end{nota}
\begin{nota}{\it N\'eron model.}
By fundamental results of A. N\'eron (we refer to \cite{BLR} for details)
 there exists a universal solution to the above problem,   the N\'eron model, provided one restricts
to the case $\dim B=1$, which we shall henceforth assume.
For our purposes, it suffices to treat the case $d=0$. The  N\'eron model 
of $ \Picphir^0\to B^*$ is a smooth,  separated group scheme of finite type over $B$, 
here denoted by
$ 
N_{\phi}^0 \to  B,
$ 
whose restriction over $B^*$ is $ \Picphir^0\to B^*$. More exactly, $N_{\phi}^0$ is the  largest separated quotient  of $\Picphi^0 \to B$.
We are going to describe   it explicitly in the special case of interest for us.

We shall assume that $B\smallsetminus B^*$ is a unique point 
$b_0$,
so that $\phi$ has only one singular fiber, $X_0:=\phi^{-1}(b_0)$.
We shall refer to $b_0$ and $X_0$ as the special point and the special fiber.
We shall assume that $X_0$ is a nodal curve. 
 
  Now we introduce the $B$-scheme $E\to B$  defined as the schematic closure in $\Picphi^0 $ of the unit
section $B^*\to \Picphi^0 $ mapping $b\in B^*$ to $\O_{X_b}\in 
\Pic( X_b)$. The fiber  of $E$ over the special point $b_0$
is a remarkable subgroup of $\Pic^0(X_0)$, called the subgroup of $\phi$-{\it twisters} and denoted by
$\Tw_{\phi}(X_0)$,
described as follows
\begin{equation}
\label{twisters}
\Tw_{\phi}(X_0):=\{\O_{\X}(\sum_{v\in V(\G)} n_vC_v)_{|X_0} ,\ \  n_v\in \Z\}_{/\cong}\subset \Pic^0(X_0).
\end{equation}
Equivalently, the $\phi$-twisters are those  line bundles on $X_0$ which occur as specializations of the trivial line bundle $\O_{\X^*}$.

Now,  $N_{\phi}^0$ is the quotient $\Picphi^0 /E$ .	
Let us point out that the N\'eron model 
 is   compatible with  finite  \'etale   base changes, but   not    with  non-\'etale   ones
(details in   \cite[Chapter 9]{BLR} or \cite[Section 3]{cner}).
  \end{nota}
\begin{nota}
{\it Component group of the N\'eron model.}
\label{nerongp}
Consider $N_{\phi}^0 \to B$.  
Its   fiber over $b_0$  depends only on $X_0$ and on the singularities of $\X$.
We need the following   standard  terminology.
\begin{defi}
\label{regsmooth}
Let   $\phi:\X\to B$ be a flat projective morphism  
  satisfying the following properties.
 $B$ is a smooth irreducible one-dimensional quasiprojective scheme;
 $b_0\in B$ is a (closed) point.
$\X$ is a nonsingular surface.
$X_0:=\phi^{-1}(b_0)$ is a  projective curve.
 
Then we say that  $\phi$ is a {\it regular one-parameter smoothing} of $X_0$.
 \end{defi}

Assume that    $\phi:\X \to B$ is     a  regular  one-parameter   smoothing of $X_0$.
Since it turns out that  the special fiber of the N\'eron model does not depend on $\phi$,  we shall denote it by $N_{X_0}$. We have that $N_{X_0}$ is non canonically isomorphic to the disjoint union of finitely many copies of the generalized jacobian of $X_0$:
\begin{equation}
\label{Ngroup}
N_{X_0}\cong \bigsqcup_{i\in \Delta_{X_0}} (\Pic^0 X_0)_{i}
\end{equation}
where  $\Delta_{X_0}$ is a finite group, often called the {\it group of components of the N\'eron model}.
The group $\Delta_{X_0}$ has been extensively studied;
in the present situation it depends only on the intersection
product of divisors of the surface $\X$, whose definition we now recall.
Using    the notation (\ref{irred}), we have  
\begin{displaymath}
(C_v\cdot C_w): =\left\{ \begin{array}{ll}
|C_v\cap C_w|& \text {if } v\neq w,\\
\\
-|C_v\cap \overline{X_0\smallsetminus C_v}|, & \text {if } v=w.\\
\end{array}\right .
\end{displaymath}
Observe that this product depends only on $X_0$, not  on $\phi$. Let us connect with definition
(\ref{twisters}); for every $v\in V(\G)$ we have   
$$
\mdeg \  \O_{\X}(C_{v})_{|X_0}=\{(C_v\cdot C_{w})\}_{w\in V(\G)}\in \Z^{V(\G)}.
$$
\begin{remark}
\label{deg0} For every $v\in V(\G)$ and every $b\in B$ we have 
$$
\deg \  \O_{\X}(C_{v})_{|X_0}=\deg \O_{\X}(C_{v})_{|X_b}=\deg (\O_{\X})_{|X_b}=0.
$$ 
\end{remark}
Then $\Delta_{X_0}$ is the quotient of degree-$0$ multidegrees by the multidegrees of all twisters,
i.e.
\begin{equation}
\label{Delta}
\Delta_{X_0}=\frac{\md \in \Z^{V(\G)}: |\md|=0}{<\mdeg\  \O_{\X}(C_{v}),\  \forall v\in V(\G)>}
=\frac{\md \in \Z^{V(\G)}: |\md|=0}{ \mdeg (\Tw_{\phi}(X_0))}
\end{equation}
where $|\md|=\sum _{v\in V(\Gamma)}d_v$ for $\md =\{d_v\}_{v\in V(\Gamma)}$ and
$\mdeg:\Pic(X_0)\to \Z^{V(\G)}$ is the multidegree homomorphism. 

Observe that $\Tw_{\phi}(X_0)$ depends on $\phi$, whereas $\mdeg  \Tw_{\phi}(X_0)$,\  
$\Delta_{X_0}$, and hence  $N_{X_0}$, do  not.

\end{nota}

\section{Divisor theory on graphs}
\label{divsec}
\begin{nota}{\it Divisors and intersection product.}
Let $\G$ be a finite connected graph, with vertex set $V(\G)$ and edge set $E(\G)$.
The genus of $\Gamma$ is its first Betti number.
The following definitions originate from \cite{BN} and \cite{bakersp}, but we do allow loops;
 see Remark~\ref{noloop}.

The group of divisors of $\G$, denoted by $\Div (\G)$, is the free abelian group generated by its  vertices:
  $$
\Div (\G):=\{\sum_{v\in  V(\G)}  n_vv, \    n_v\in \Z \}\cong \Z^{V(\G)}.
  $$
 %We denote by $\mo\in \Div(\Gamma)$ the identity.

The degree of a divisor $D=\sum  n_vv$ is defined as $\deg D:=\sum n_v$; we denote by $\Div ^d(\G)$ the set of divisors of degree $d$.
If $n_v\geq 0$ for all $v$ we
  say that $D$ is {\it effective}, and write $D\geq 0$.

There is an intersection product on $\Div(\G)$ given by linearly extending the following definition
\begin{displaymath}
(v\cdot w)=\left\{ \begin{array}{ll}
\text{number of edges joining }v  \text{ and } w&\text{ if } v\neq w\\
- \deg (v) +2 \lp(v)&\text{ if } v= w\\
\end{array}\right.
\end{displaymath}
where  $\deg(v)$ is the degree, or valency,   of  $v$, and $\lp(v)$ is the number of loops based at $v$.

\end{nota}
\begin{remark}
\label{intrk}
If $\G$ is the dual graph of the nodal curve $X_0$, we have
$$
(v\cdot w)=(C_v\cdot C_w),
$$
with  the right-hand side as  defined earlier.
\end{remark}
\begin{nota}{\it Principal divisors.}
To define principal divisor in analogy with the case of algebraic curves, one  considers the set of functions on $\G$, that is, the set  
$ 
k(\G):=\{f:V(\G) \la \Z \}.
$  
Then   the ``order" of $f\in k(\G)$ at $v\in V(\G)$ is  the following integer
$$
{\rm{ord}}_v(f):= \sum_{w\in V(\G) }(v\cdot w) f(w) \in \Z.
$$
Now, to any $f\in k(\G)$ we associate the divisor $\dv (f)\in \Div(\G)$ defined as follows:
$$
\dv (f):= \sum_{v\in V(\G)}{\rm{ord}}_v(f) v .
$$
We denote by $\Prin (\G)$ the set of all divisors of the form $\dv (f)$.
\end{nota}
\begin{defi}
For $v\in V(\G)$ let $f_{v}:V(\G)\to \Z$ be the function 
such that $f_{v}(v)=1$ and $f_{v}(w)=0$ for all $w\in V(\G)\smallsetminus v$.
We set
\begin{equation}
\label{twist}
T_{v}:=\dv (f_{v})= \sum_{w\in V(\G)}(w\cdot v) w.
\end{equation}
\end{defi}
\begin{remark}
\label{twisterrk}
Let us relate this to the situation presented in Section~\ref{nerongp}: if $\phi$ is a regular one-parameter smoothing of 
$X_0$ then by Remark~\ref{intrk} we have
\begin{equation}
\label{twistdeg}
T_{v }=\mdeg_{X_0} \O_{\X}(C_{v}).
\end{equation}

The set $\{T_v,\  \forall v\in V(\G)\}$ clearly generates $\Prin (\G)$.
Therefore we can identify $\Prin (\G)$ as the  group of all multidegrees of $\phi$-twisters,
i.e. 
$$
\Prin (\G)=\{\mdeg T,\  \forall T\in \Tw_{\phi}(X_0)\}.
$$

By Remark~\ref{deg0}, or by an easy combinatorial argument,  it follows  that principal divisors on $\G$ have degree $0$.
\end{remark}

The {\it Jacobian group} of $\G$ is 
$
\Jac (\G):=\Div ^0(\G)/\Prin (\G).
$

From the previous discussion we obtain  the following  fact,  well known in algebraic geometry
 (with a different terminology on the combinatorial side).
 \begin{fact}
Let  $\phi:\X \to B$ be a regular  one-parameter smoothing of a nodal curve $X_0$, and let $\G$ be the dual graph of $X_0$. Then there is a canonical isomorphism
$$
\Delta_{X_0}\cong \Jac(\G)
$$ between the component group of the N\'eron model of $\Pic_{\phi^*}^0\to B^*$ and the Jacobian group of $\G$.
\end{fact}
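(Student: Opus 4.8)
The plan is to show that the two groups are presented as quotients of literally the same abelian group by literally the same subgroup, once everything is transported across the canonical identification $\Div(\G)\cong\Z^{V(\G)}$. Since the component group $\Delta_{X_0}$ has already been described in (\ref{Delta}) as a quotient of $\{\md\in\Z^{V(\G)}:|\md|=0\}$, the whole task reduces to matching the numerators and the denominators of the two quotients, with no further geometric input required.

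For the numerators, the free abelian group $\Div(\G)$ on the vertex set is canonically $\Z^{V(\G)}$, and under this identification the degree $\deg D=\sum_v n_v$ of a divisor $D=\sum_v n_v v$ becomes $|\md|$. Hence $\Div^0(\G)$ corresponds exactly to $\{\md\in\Z^{V(\G)}:|\md|=0\}$, the numerator in (\ref{Delta}). For the denominators, I would invoke Remark~\ref{twisterrk}: the generators $T_v=\dv(f_v)=\sum_w(w\cdot v)w$ of $\Prin(\G)$ coincide, by Remark~\ref{intrk} (which gives $(v\cdot w)=(C_v\cdot C_w)$) and by (\ref{twistdeg}), with the multidegrees $\mdeg_{X_0}\O_{\X}(C_v)$. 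Thus $\Prin(\G)$ equals the subgroup $\langle\mdeg\ \O_{\X}(C_v),\ \forall v\rangle=\mdeg(\Tw_{\phi}(X_0))$ appearing as the denominator in (\ref{Delta}). Putting the two identifications together yields
$$\Jac(\G)=\Div^0(\G)/\Prin(\G)=\frac{\md\in\Z^{V(\G)}:|\md|=0}{\mdeg(\Tw_{\phi}(X_0))}=\Delta_{X_0},$$
which is the desired isomorphism.

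The isomorphism is canonical because every identification used above is: the bijection between $V(\G)$ and the set of irreducible components of $X_0$ is built into the definition of the dual graph, and the intersection-product comparison of Remark~\ref{intrk} is an equality, not merely an isomorphism. There is essentially no analytic or geometric obstacle here; the entire content of the statement has been loaded into the earlier remarks, in particular into the identity $(v\cdot w)=(C_v\cdot C_w)$ that makes the combinatorial principal divisors agree on the nose with the multidegrees of twisters. The only point requiring a moment of care is to confirm that $\Prin(\G)$ is generated by exactly these $T_v$, with no further relations imposed and no generators omitted, and this is precisely what Remark~\ref{twisterrk} records.
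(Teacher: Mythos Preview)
Your proposal is correct and is precisely the argument the paper intends: the Fact is stated as a direct consequence of ``the previous discussion,'' meaning exactly the identifications in Remark~\ref{intrk}, Remark~\ref{twisterrk}, and equation~(\ref{Delta}) that you invoke. There is no separate proof in the paper beyond these remarks, so your write-up simply makes explicit what the paper leaves to the reader.
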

We wish to emphasize the simple but important fact that the regularity assumption on $\X$ cannot be removed.
\begin{nota}{\it Combinatorial rank.}
We now go back to the purely graph-theoretic setting.
We say that $D, D'\in \Div (\G)$ are equivalent, and write $D\sim D'$, if $D-D'\in \Prin (\G)$.
It is clear that if $D\sim D'$ then $\deg D =\deg D'$.
Set
$$
|D|:=\{E\in \Div (\G): E\geq 0,\  E\sim D\}
$$
and
\begin{displaymath}
r_{\Gamma}(D)=\left\{ \begin{array}{ll}
-1 & \text {if } |D|=\emptyset \\
\\
max\{k\geq 0: \  \forall  E\in \Div^k_+(\G)  \quad    |D-E| \neq \emptyset \} & \text {otherwise,}\\
\end{array}\right.
\end{displaymath}
where $\Div^k_+(\G)$ denotes the set of effective divisors of degree $k$.

\begin{remark}
\label{basic}
If $D\sim D'$ then $|D|=|D'|$ and $r_{\Gamma}(D)=r_{\Gamma}(D')$.

Also,   $r_{\Gamma}(D)\leq \max\{-1, \deg D\}$.
\end{remark}
\begin{remark}
\label{noloop}
It is clear that the previous definitions do not depend on the loops of $\G$. In fact,
throughout  \cite{BN} and \cite{bakersp} the authors assume that the graphs are free from loops.
A good reason for doing that is that the definition of rank given above
is somewhat ``rough", for example, it
does not satisfy the Riemann-Roch formula if $\G$  contains loops. 
Anyways, even if $\G$ has some loops, with the above  rough definition
of rank the results of the present paper continue to hold, but are less tight.  
Therefore, in Section~\ref{loopsec} we will   give a more precise definition for the rank of a divisor on a graph admitting loops,
and show that our results generalize with that definition.
\end{remark}
 \end{nota}

\section{Baker Specialization Lemma refined}
\label{specsec}
Let $\phi:\X\to B$ be a family of curves.
The associated Picard scheme $\Picphi$ may be viewed as a functor from the category of schemes over $B$ to the category  of sets; see \cite[Chapter 8]{BLR}. In particular,
  $\Picphi (B)$ denotes the set of  regular sections of $\pi:\Picphi\to B$:
  $$
  \Picphi (B)=\{\sigma:B\to \Picphi:\quad  \pi\circ\sigma=id_B \}.
  $$
 \begin{remark}
There is a natural map
\begin{equation}
\label{brauer}
\Pic (\X) \la \Picphi (B);\quad \quad \L \mapsto \sigma_{\L}
\end{equation}
 such that for every $b\in B$ we have 
$ 
\sigma_{\L}(b)= \L_{|X_b}.
$ Observe that the map (\ref{brauer}) may very well fail to be surjective; 
see \cite[Ch. 8, Prop. 4]{BLR}.
\end{remark}
Let $\phi:\X \to B$ be a  family of curves 
%satisfying the assumption (\ref{special}) of \ref{assumptions}.
as above, and let $b_0\in B$ be a fixed (closed) point.
As usual, we set $X_0=\phi^{-1}(b_0)$;
we assume that $X_0$ is a nodal curve 
 and denote by $\G$ the dual graph of $X_0$.
We 
identify $\Div(\G)=\Z^{V(\G)}$, so that we have a map
$$
\Pic (X_0) \la \Div(\G)=\Z^{V(\G)};\quad L\mapsto \mdeg L.
$$
Now, we have a {\it specialization} map  $\tau=\tau_{\phi, b_0}$
mapping a section of $\pi$ to the multidegree of its value on $b_0$:
\begin{equation}
\label{tau}
\begin{array}{lccr}
\Picphi(B) &\stackrel{\tau}{\la} &\Div(\G) \\
\\
  \  \  \  \sigma&\mapsto &\mdeg \ \sigma(b_0).
\end{array}
\end{equation}
\begin{remark}
To connect with Baker's work we need to temporarily drop the general conventions  stated at the begining of Section~\ref{agsec}.
The definition of the map $\tau$ above is
inspired by the specialization map, denoted by $\rho$, defined   in 
\cite[Subsection 2.1]{bakersp}. Our definition is a slight generalization:
the map $\rho$   coincides with the composition of our $\tau$
with the canonical map (\ref{brauer}) (defined at the level of divisors, rather than linear equivalence classes). More precisely: let $B=\Spec R$ with $R$ a complete DVR with algebraically closed residue field (which is the set-up of \cite{bakersp}, to which our previous definitions are easily seen to extend), then
 $\rho$ can be defined as follows
$$
\rho:\Div(\X)\la \Pic (\X) \la \Picphi (B)\stackrel{\tau}{\la} \Div(\G).
$$
So, we use the terminology ``specialization map" for consistency with \cite{bakersp},
since no confusion should arise.
\end{remark}
 \begin{remark}
\label{unbounded}
  Let $\phi$ be a regular one-parameter smoothing  of a reducible nodal curve 
$X_0$, let
 $\G $ be  the dual graph of $X_0$; pick $\L\in \Pic (\X)$.
By the classical upper-semicontinuity theorem we have, for all $b\in B$,
\begin{equation}
\label{upper}
r(X_b,  \L_{X_b}) \leq r(X_0, {\L}_{X_0}).
\end{equation}
Let $d$ be the $\phi$-relative degree of  $\L$, i.e. $d=\deg  \L_{X_b}$ for all $b\in B$.
Now, denote by $\L^*$ the restriction of $\L$ away from $X_0$.
Then there are infinitely many different completions of $\L^*$  to a line bundle on the whole of $\X$,
and hence 
%infinitely many different 
%sections $\sigma\in \Picphi(B)$ which agree with $\sigma_{\L}$ for every $b\neq b_0$.
%In other words, there are
infinitely many line bundles on $X_0$ appearing as specializations of  $\L^*$.
The point  is that, as $\L^*$ is fixed, the term on the right of the inequality (\ref{upper}) is unbounded.
Indeed, let $C\subset X_0$ be an irreducible component, and set $d_C:=\deg_C\L$.
Then for every $n\in \Z$ the line bundle 
$$\L^{(n)}:=\L(-nC)\in \Pic(\X)
$$ is a completion of $\L^*$, having $\phi$-relative degree $d$, just like $\L$,  of course. If $n\geq 1$ then 
$$
\deg_C\L^{(n)}=d_C-nC\cdot C\geq d_C+n,
$$
and if $n\gg 0$
the degree of  $\L^{(n)}$ on $\overline{X_0\smallsetminus C}$ is negative. Also, if $n\gg 0$ we have
$$
H^0(X_0,\L^{(n)}_{X_0})\supset H^0(C,\L^{(n)}_{C}(-C\cap \overline{X_0\smallsetminus C}))
$$
and the dimension of the space on the right goes to $+\infty$ as $n$ grows. Therefore $r(X_0,\L^{(n)}_{X_0})$ goes to $+\infty$ with $n$.

We now  look at  the corresponding situation for the combinatorial ranks.
Consider the  specialization  of $\L^{(n)}$  via the map  $\tau$  defined in (\ref{tau}):
$$
D^{(n)}:=\tau (\sigma_{\L^{(n)}})\in \Div (\G).
$$
Then   for every $n,m\in \Z$ we have $D^{(n)}\sim D^{(m)}$ hence, by Remark~\ref{basic}
$$
r_{\G}(D^{(n)})=r_{\G}(D^{(m)})\leq d. 
$$
Concluding,  the combinatorial rank   behaves better under specialization, than the algebro-geometric rank, as it depends only on $\L^*$ (and not on the choice of   completion) and it is bounded by the relative degree of $\L^*$.
 \end{remark}

  The following is a refinement of Baker's Specialization Lemma, which we like to view as a mixed  upper semicontinuity result.
  
\begin{prop}[Mixed semicontinuity]
\label{spe}
Let $\phi:\X \to B$ be a regular one-parameter smoothing  of a nodal curve $X_0$  having dual graph $\G$; consider the map $\tau$ defined in (\ref{tau}).
Then for every $\sigma\in \Picphi(B)$ there exists an open neighborhood $U\subset B$ of $b_0$ such that for every $b\in U$  with  $b\neq b_0$
\begin{equation}
\label{mixed}
r(X_b, \sigma (b))\leq r_{\G}(\tau (\sigma)).
\end{equation}
\end{prop}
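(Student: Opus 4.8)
The plan is to reduce the statement to the single combinatorial inequality $r_{\G}(\tau(\sigma))\geq s$, where $s$ denotes the \emph{generic} value of $r(X_b,\sigma(b))$. Since $b\mapsto r(X_b,\sigma(b))$ is upper semicontinuous (cf. (\ref{upper})) and $B$ is a curve, the locus on which it exceeds its minimum is a proper closed, hence finite, subset of $B$; shrinking $U$ so that this locus meets $U$ only possibly in $b_0$, I get $r(X_b,\sigma(b))=s$ for every $b\in U\smallsetminus\{b_0\}$. Writing $D:=\tau(\sigma)$ and unwinding the definition of $r_{\G}$, it then suffices to prove that for every effective divisor $E=v_1+\cdots+v_s\in\Div^s_+(\G)$ one has $|D-E|\neq\emptyset$ (the case $s\leq 0$ being immediate).

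First I would realize the prescribed vertices by sections. Each component $C_{v_i}$ contains a point that is smooth on $X_0$, and there $\phi$ is smooth because $\X$ is a regular surface; so after a finite \'etale base change -- harmless for the dual graph and compatible with the N\'eron model, as recalled in Section~\ref{neron} -- I may choose sections $\delta_i\colon B\to\X$ of $\phi$ with $\delta_i(b_0)$ a smooth point of $C_{v_i}$. The closure $\overline{\delta_i}$ then meets $X_0$ transversally in a single point of $C_{v_i}$, whence $\mdeg\O_{\X}(\overline{\delta_i})_{|X_0}=v_i$ in $\Div(\G)$. Because the sections supply $K$-rational points of the generic fibre $\gen$ and $h^0$ is invariant under the field extension $k(B)\subset\overline{k(B)}$, the geometric equality of the rank with $s$ yields a $k(B)$-rational effective divisor $F$ on $\gen$ with $F\sim\sigma_{|\gen}-\sum_i\delta_i$. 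Taking closures in $\X$ produces the line bundle $\L:=\O_{\X}(\overline F+\sum_i\overline{\delta_i})$, whose associated section $\sigma_{\L}$ (via (\ref{brauer})) agrees with $\sigma$ over the dense open $B^*$.

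The key step is to compare $\sigma$ and $\sigma_{\L}$ at $b_0$. They need not coincide there -- this is precisely the non-separatedness of $\Picphi$ -- but, since $N_{\phi}^0$ is the largest separated quotient of $\Picphi^0$, two sections agreeing over $B^*$ have the same image in the N\'eron model; hence $\sigma(b_0)$ and $\sigma_{\L}(b_0)=\L_{|X_0}$ differ by an element of $\Tw_{\phi}(X_0)$. Taking multidegrees and invoking Remark~\ref{twisterrk}, this says exactly that $D=\tau(\sigma)\sim\tau(\sigma_{\L})$ in $\Div(\G)$. Finally I would compute, using additivity of $\mdeg$, that $\tau(\sigma_{\L})=\mdeg\O_{\X}(\overline F)_{|X_0}+\sum_i v_i=G+E$, where $G:=\mdeg\O_{\X}(\overline F)_{|X_0}$ is effective: $\overline F$ is a horizontal effective divisor, so $\overline F\cdot C_w\geq 0$ for every $w\in V(\G)$. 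Therefore $D-E\sim G\geq 0$, giving $|D-E|\neq\emptyset$, as required.

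The main obstacle -- and the genuine refinement over \cite{bakersp} -- is the twister comparison of the third paragraph: it is what allows the argument to run for an \emph{arbitrary} section $\sigma$ of $\Picphi$, rather than only for a line bundle globally defined on $\X$ as in Baker's setting, since the discrepancy at $b_0$ is a twister and hence invisible to the linear-equivalence invariant $r_{\G}$ (this is the conceptual content already foreshadowed in Remark~\ref{unbounded}). The remaining point to watch is the existence of the sections $\delta_i$ and the descent of $F$ to the base field; these dissolve because the base change needed is \'etale, leaving $\G$ and the N\'eron model unchanged, and because $h^0$ does not change under field extension.
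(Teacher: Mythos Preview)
Your argument is correct and close in spirit to the paper's, but the execution diverges in two places worth noting.

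First, the paper does not use the N\'eron-model/twister comparison you invoke in your third paragraph. Instead, once sections exist (after the same finite \'etale base change you perform), the paper appeals to \cite[Ch.~8, Prop.~4]{BLR} to conclude that the canonical map $\Pic\X\to\Picphi(B)$ is \emph{surjective}; hence $\sigma=\sigma_{\L}$ on the nose for some $\L\in\Pic\X$, and no twister adjustment at $b_0$ is needed. Your route---building $\L$ from the closure of an effective divisor on the generic fibre and then arguing that $\sigma(b_0)$ and $\L_{|X_0}$ differ by a $\phi$-twister---is a legitimate alternative, and it is indeed justified by the description of $E\subset\Picphi^0$ recalled in \ref{neron}. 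It trades one citation (surjectivity of (\ref{brauer})) for another (the twister description of the unit-section closure).

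Second, and more substantively, the paper establishes $r_{\G}(\tau(\sigma))\geq r$ by induction on $r$: for the base case it writes $\L=\O_{\X}(E_{\rm hor}+E_{\rm ver})$ and reads off that $\mdeg L_0$ is equivalent to the effective divisor $\mdeg_{X_0}E_{\rm hor}$; for the inductive step it subtracts a single section through each vertex $v$, applies the induction hypothesis to get $r_{\G}(D_0-v)\geq r-1$ for all $v$, and then invokes \cite[Lemma~2.7]{bakersp}. You instead unfold the definition of $r_{\G}$ and, for each effective $E\in\Div^s_+(\G)$, produce directly an effective representative of $D-E$. This bypasses Baker's Lemma~2.7 entirely, at the cost of choosing (for each $E$) sections $\delta_1,\dots,\delta_s$ realizing the vertices of $E$ and a rational effective divisor $F$ on $\gen$; the horizontality of $\overline{F}$ then plays the role that $E_{\rm hor}$ plays in the paper. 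Your approach is slightly more self-contained; the paper's stays closer to Baker's original argument.
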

\begin{remark}
A remarkable special case is that of an element in $\Picphi(B)$ of type $\sigma_{\L}$,
for some $\L\in \Pic \X$,
as  defined in (\ref{brauer}).
Then   \ref{spe} states that
 there is a neighborhood $U\subset B$ of $b_0$ such that 
$$
h^0(X_b, \L_{|X_b})-1\leq r_{\G}(\mdeg   \L_{|X_0})
$$
for every $b\in U$ with $b\neq b_0$.
\end{remark}
\begin{remark}
\label{glue}
By Remark~\ref{unbounded} it is clear that
the assumption $b\neq b_0$ is necessary, as $r(X_0, \sigma (b_0))$ is unbounded
(if  $X_0$
is reducible), whereas,
 if $d$ denotes the   $\phi$-relative degree of $\sigma$,   then $r_{\G}(\tau (\sigma))\leq d$.

\end{remark}
\noindent
{\it{Proof of Proposition~\ref{spe}.}}
We begin with the following
\begin{claim}
We can work up to finite \'etale base change.
\end{claim}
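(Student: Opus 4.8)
The plan is to reduce the proof of the inequality (\ref{mixed}) to the same inequality for a conveniently chosen finite \'etale cover of $B$, by checking that every datum entering the statement of Proposition~\ref{spe} is preserved under such a base change. Concretely, let $\psi\colon B'\to B$ be a finite \'etale morphism and let $b_0'\in B'$ be a point lying over $b_0$; replacing $B'$ by the connected, hence smooth irreducible, component containing $b_0'$, form the fiber product $\X':=\X\times_B B'$ with its projection $\phi'\colon\X'\to B'$. First I would check that $(\phi',b_0')$ is again a regular one-parameter smoothing in the sense of Definition~\ref{regsmooth}: the morphism $\X'\to\X$ is the base change of the \'etale morphism $\psi$, hence itself \'etale, so $\X'$ is a nonsingular surface because $\X$ is, while $\phi'$ remains flat and projective with one-dimensional fibers. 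This is precisely the step where \'etaleness is indispensable, and it is the reason the reduction is phrased for \'etale, rather than arbitrary, base changes, in agreement with the compatibility properties of the N\'eron model recalled in Section~\ref{agsec} (see \cite[Chapter 9]{BLR} or \cite[Section 3]{cner}).

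Next I would verify that the right-hand side of (\ref{mixed}) is unaffected. Since fiber products commute with the formation of fibers and $\psi$ is unramified with residue field $k$ at $b_0'$, the special fiber of $\phi'$ over $b_0'$ is canonically identified with $X_0$; in particular the dual graph $\G$, the intersection product, and hence the target $\Div(\G)$ of the specialization map coincide for $\phi$ and $\phi'$. The section $\sigma\in\Picphi(B)$ pulls back to a section $\sigma'\in\Pic_{\phi'}(B')$, and since $\Pic_{\phi'}=\Picphi\times_B B'$ one has $\sigma'(b_0')=\sigma(b_0)$ as a point of $\Pic(X_0)$; therefore $\tau_{\phi',b_0'}(\sigma')=\tau(\sigma)$, so that $r_{\G}(\tau(\sigma))$ is literally the same quantity before and after the base change.

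Finally I would transfer the left-hand side and descend the neighborhood. For any $b'\in B'$ with $\psi(b')=b$ the fiber $X'_{b'}$ is isomorphic to $X_b$ and $\sigma'(b')$ corresponds to $\sigma(b)$, whence $r(X'_{b'},\sigma'(b'))=r(X_b,\sigma(b))$; thus (\ref{mixed}) for $\phi$ at $b$ is equivalent to (\ref{mixed}) for $\phi'$ at any preimage $b'$ of $b$. Granting the inequality on a punctured neighborhood $U'$ of $b_0'$, and shrinking $U'$ so that $b_0'$ is the only point of the finite set $\psi^{-1}(b_0)$ contained in it, I note that $U:=\psi(U')$ is an open neighborhood of $b_0$, because $\psi$ is \'etale, hence open; every $b\in U$ with $b\neq b_0$ admits a preimage $b'\in U'$ with $b'\neq b_0'$, so that $r(X_b,\sigma(b))=r(X'_{b'},\sigma'(b'))\le r_{\G}(\tau(\sigma))$, as desired. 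The only genuinely delicate point in this reduction is the first one — preservation of the regular surface structure, which is what forces the restriction to \'etale covers; the remainder is the formal base-change compatibility of the Picard functor together with the elementary topology of the finite \'etale map $\psi$.
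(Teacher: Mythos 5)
Your argument is correct and follows essentially the same route as the paper's proof: étaleness of the base change gives nonsingularity of $\X'$, the special fiber (hence $\G$ and the target of the specialization map) is unchanged, the pulled-back section satisfies $\tau'(\sigma')=\tau(\sigma)$ by compatibility of the Picard functor with base change, ranks transfer fiberwise via $X'_{b'}\cong X_b$, and the neighborhood descends because étale maps are open. Your extra shrinking of $U'$ to exclude the other points of $\psi^{-1}(b_0)$ is not needed for the final inequality (any preimage of $b\neq b_0$ is automatically distinct from $b_0'$), but it harmlessly mirrors the paper's shrinking of $B'$ so that $\phi'$ has smooth fibers away from $b_0'$.
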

To prove the claim, let $\e:B'\to B$ be a  finite \'etale morphism and let
\begin{equation}\label{diag1}
\xymatrix{
\X' \ar@{^{}->}[r]^{\hat{\e}}  \ar@{->}[d]_{\phi '} & \X\ar@{->}[d]^\phi \\
B' \ar@{->}^\e[r] &B
}
\end{equation}
be the corresponding base change,
so that $\hat{\e}:\X'=\X\times_BB'\to \X$  is the projection. 
As $\e$ is \'etale, the total space $\X'$ is nonsingular.
Let $b'_0\in B'$ be such that $\e(b'_0)=b_0$;
of course, the preimage of $\phi'$ over $b'_0$ is isomorphic to $X_0$, and hence it has the same dual graph $\G$. Now,
  let $\tau'=\tau_{\phi',b_0'}$ be the specialization map of $\phi'$  with respect to $b'_0$
(see  (\ref{tau})). 
We have a commutative diagram
\begin{equation}\label{diag2}
\xymatrix{
\Picphi(B) \ar@{^{}->}^{\e^*}[r]  \ar@{->}[d]_{\tau } &\Pic_{\phi '}(B')\ar@{->}[d]^{\tau '} \\
\Div(\G)\ar@{=}[r] &\Div(\G)
}
\end{equation}
where $\e^*$ is the pull-back of sections
(for $\sigma \in \Picphi(B)$ and $b'\in B'$ we have
 $$\e^*(\sigma)(b'):=\hat{\e}^*\sigma(\e(b'))\in \Pic X'_{b'}$$ 
 where $\hat{\e}^*: \Pic X_{\e(b')}\to  \Pic X'_{b'}$ is the ordinary pull-back.)

Up to replacing $B'$ with an open neighborhood of $b'_0$ we can assume that
$\phi' $ has smooth fibers away from $b'_0$, so that $\phi'$ satisfies the same hypotheses as $\phi$.
Now assume the result holds for $\phi'$. Let $\sigma\in \Picphi(B)$ and let 
$$
\sigma'=\e^*(\sigma).
$$
Let $U'\subset B'$ be a neighborhood of $b'_0$ such that
\begin{equation}
\label{th'}
r(X'_{b'}, \sigma' (b'))\leq r_{\G}(\tau' (\sigma'))
\end{equation}
for every $b'\in U'\smallsetminus \{b_0'\}$.
Pick an open neighborhood $U\subset \e (U')$ of $b_0$; for every $b\in U$ 
and every $b' \in \epsilon^{-1}(b)$
we have $X_b\cong X'_{b'}$
and $r(X_{b}, \sigma  (b))=r(X'_{b'}, \sigma' (b'))$. On the other hand the commutativity of (\ref{diag2})
gives
$\tau (\sigma)=\tau'(\e^*(\sigma))=\tau'(\sigma ')$. Combining with (\ref{th'}) we get that (\ref{mixed}) holds. The claim is proved.

\

By the claim, we can assume that $\phi$ has   sections;
in particular we shall assume that
for every irreducible component $C_v\subset X_0$, the map  $\phi$ has a section $s_v$  
intersecting $C_v$.

 From now on we shall work up to replacing $B$ by an open subset containing $b_0$, which we can obviously do.

Fix $\sigma\in \Picphi(B)$.
The existence of a section of $\phi$ ensures that the canonical map $\Pic \X \la \Picphi(B)$ introduced in (\ref{brauer})
is surjective; see \cite[Ch. 8, Prop. 4]{BLR}.
Hence there exists $\L\in \Pic \X$ such that
for every $b\in B$ we have $\L_{|X_b}=\sigma (b)$.
We write $L_b=\L_{|X_b}$ and $L_0=\L_{|X_0}$.

We shall prove that if $r(X_b, L_b)\geq r$ then
$ r_{\G}(\mdeg L_0)\geq r$ (which is of course equivalent to (\ref{mixed})).

If $r=-1$ there is nothing to prove; so we may assume $r\geq 0$. 

Since $r\geq 0$ we have that $\phi_* \L$ has positive rank ($\phi$ is proper). There  exists 
an $M\in \Pic B$ such that $h^0(B, \phi_* \L\otimes M)\geq 1$.
Therefore, as $\phi_* \phi^*M=M$,
$$
h^0(\X,  \L\otimes \phi^*M)=h^0(B, \phi_* \L\otimes M)\geq 1.
$$
Hence there exists an effective divisor $E$ on $\X$ such that $\L=\O_{\X}(E)$.
We can decompose
$$
E=E_{\rm{hor}}+E_{\rm{ver}}, 
$$
with $E_{\rm{hor}}$ an effective ``horizontal" divisor  (i.e. the support of $E_{\rm{hor}}$ contains no component of the fibers of $\phi$) and $E_{\rm{ver}}$ an effective ``vertical" divisor (i.e. $E_{\rm{ver}}$ is entirely supported on fibers of $\phi$). Up to shrinking $B$ we can further assume that $E_{\rm{ver}}$ is supported only on $X_0$.

Now we are ready to finish the proof of our result, using induction on $r$.
What now follows is similar to Baker's proof of his Specialization Lemma.
If $r=0$ we need to show that $\mdeg L_0$ is equivalent to an effective divisor on $\G$.
We have
$$
\mdeg L_0=\mdeg_{X_0} E_{\rm{hor}}+\mdeg_{X_0} E_{\rm{ver}}. 
$$
Now, 
$$\mdeg_{X_0} E_{\rm{hor}}\geq 0$$ because $E_{\rm{hor}}$ is a horizontal effective divisor.
On the other hand, 
by hypothesis, $E_{\rm{ver}}$ is of type 
$$E_{\rm{ver}}=\sum_{v\in V(\G)} n_vC_v$$ where $X_0=\cup_{v\in V(\G)} C_v$.

Therefore, using Remark~\ref{twisterrk} and (\ref{twistdeg}) , we have
$$
\mdeg_{X_0} E_{\rm{ver}}=\sum _{v\in V(\G)} n_vT_v\in \Prin (\G).
$$
In conclusion, $\mdeg L_0\sim \mdeg_{X_0} E_{\rm{hor}}$, so we are done. The case $r=0$ is finished.

Assume $r\geq 1$. For every vertex $v\in \G$, let $s_v$ be the previously introduced section of $\phi$ passing through $C_v$,
and let
$A_v:=s_v(B)$ be the corresponding effective divisor on $\X$.
We have, of course,
$$
r(X_b, \L(-A_v)_{|X_b})\geq r(X_b,  L_b)-1 \geq r-1
$$
for all $b\in B$. 
Denote by
$$D_0=\mdeg L_0\in \Div (\G).$$
Let $$
\hat{\tau}:\Pic \X \la \Picphi(B)\stackrel{\tau}{\la} \Div (\G)
$$
be the composition of  $\tau$ with
the canonical map (\ref{brauer});  we have 
$$
\hat{\tau}( \L(-A_v))=D_0-v \in \Div (\G).
$$
By the induction hypothesis,
we have
$$
r_{\G}(D_0-v)\geq r-1.
$$
This holds for every $v\in V(\G)$. Therefore, using \cite[Lemma 2.7]{bakersp} we get  $r_{\G}(D_0)\geq r$
and we are done. \qed

\section{Specialization for   graphs   with loops}
\label{loopsec}

Let $\G$ be a graph admitting some loops.
The definition of rank of a divisor given before is independent of the loops, and in fact it is not 
a satisfactory one; for example, it trivially violates the Riemann-Roch formula
which does hold on graphs free from loops, by \cite{BN}.
We shall now give a better definition, and extend the Specialization Lemma to this definition. 
First, we shall introduce some useful terminology.

\begin{defi}
Let $\G$ be a graph. A {\it refinement} of $\G$ is a graph $\hG$ obtained by inserting a (finite) set of vertices in the interior of the edges of $\G$.
We have a natural inclusion 
$V(\G)\subset V(\hG)$ which induces an injective group homomorphism
\begin{equation}
\label{iota}
\iota_{\G,\hG}:\Div \G \ha \Div \hG.
\end{equation}
\end{defi}
We shall often write simply $\iota=\iota_{\G,\hG}$.

In general, the map (\ref{iota}) is not 
compatible with linear equivalence, nor  does it
preserve   the rank (see Example~\ref{loop1}). There is, however, a useful  situation in which the rank is preserved.
\begin{remark}
\label{HKN}
Let $\G$ be a graph and let $\G^{(n)}$ be the refinement of $\G$ given by inserting $n$ vertices in the interior of every edge of $\G$. Then, if $\G$ has no loops, for every $D\in \Div (\G)$ we have
$$
r_{\G}(D)=r_{\G^{(n)}}(\iota_{\G, \G^{(n)}}(D)).
$$
 This follows from \cite[Corollary 22]{HKN}; see also \cite[Thm 1.3]{luo}.
\end{remark}

There is another type of refinement which preserves the rank, and which enables us to define the rank for a graph with loops in a sharper way. So,
let $\G$ be a graph admitting $l$ loops;
denote by $\{\ell_1,\ldots,\ell_l\}\subset E(\G)$  the set of loop-edges.

 Let $n_1,\ldots, n_l$ be positive integers and let $\underline{n}:=(n_1,\ldots, n_l)$
be their ordered sequence.
Let  
 $\G^{\underline{n}}$
be the graph obtained by inserting  $n_i$   vertices in the interior of every loop-edge $\ell_i$, and leaving the other edges untouched. 
As $\G^{\underline{n}}$ is a refinement of $\G$ there is a natural   map
$  
\iota :\Div \G\to \Div \G^{\underline{n}}.
$  
For every $D\in \Div(\G)$
we define
\begin{equation}
\label{rs}
\rs_{\G}(D) :=r_{\G^{\underline{n}}}(\iota (D)).
\end{equation}

\begin{remark}
\label{invloop}
The above definition is independent on the choice of the numbers $n_1,\ldots, n_l$,
provided $n_i\geq 1$ for all $i=1,\ldots, l$.
More precisely,
for every  $\underline{m}:=(m_1,\ldots, m_l)$  with $m_i\geq 1$ for every $i=1,\ldots, m$,
from \cite[Theorems 1.3 and 1.5]{luo}  we obtain
$$
 r_{\G^{\underline{n}}}(\iota_{\G,\G^{\underline{n}}} (D))=r_{\G^{\underline{m}}}(\iota_{\G,\G^{\underline{m}}} (D)).
 $$
\end{remark}
With definition (\ref{rs}) the Riemann-Roch formula holds; see \cite{ABC}.
  
Comparing with the rank $r_{\G}$ defined earlier, it is not hard to see that 
$$
\rs_{\G}(D) \leq r_{\G}(D).  $$ 
\begin{example}
\label{loop1}
Consider the graph $\G$ of genus 2 drawn in the picture below, so
$\G$ has one loop-edge attached to the vertex $v$. To compute $\rs_{\G}$ we can use the graph
${\Gamma}^{(1)} $ obtained by inserting one vertex $u$ in the loop-edge of $\G$.
\begin{figure}[h]
\begin{equation*}
\xymatrix@=.5pc{
&&&&&&&&&&&&&&\\
\Gamma = &&\ar@{-}@(ul,dl)*{\bullet}\ar @{-} @/_.9pc/[rrr]_(.1)v_(.9)w  \ar@{-} @/^.9pc/[rrr]
&&& *{\bullet} &&&&&&&
{\Gamma}^{(1)} = &&*{\bullet} \ar @{-} @/_.9pc/[rrr]_(.1)u \ar@{-} @/^.9pc/[rrr]
&&& *{\bullet}\ar @{-} @/_.9pc/[rrr]_(.1)v_(.9)w \ar@{-} @/^.9pc/[rrr]
&&& *{\bullet} &&&\\
&&&&&&&&&&&&&&\\
}
\end{equation*}
\end{figure}

Consider the divisor $v+w\in \Div\G$. Then one easily checks that
$$
r_{\G}(v+w)=1,\quad\quad \rs_{\G}(v+w)=0.
$$
On the other hand
$
r_{\G}(2v)=1=\rs_{\G}(2v).
$

\end{example}

We now  prove that  Proposition~\ref{spe} holds with this   definition of rank.
\begin{prop}
\label{speloop}
Assume that $\phi:\X \to B$   satisfies the same assumptions as Proposition~\ref{spe}.
Then for every $\sigma\in \Picphi(B)$ there exists an open neighborhood $U\subset B$ of $b_0$ such that for every $b\in U\smallsetminus  b_0$ we have
\begin{equation}
\label{mixedl}
r(X_b, \sigma (b))\leq \rs_{\G}(\tau (\sigma)).
\end{equation}

\end{prop}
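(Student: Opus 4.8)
The plan is to reduce the inequality (\ref{mixedl}) to the loopless situation already settled in Proposition~\ref{spe}, by replacing $\phi$ with a regular one-parameter smoothing whose special fiber has, as dual graph, a refinement of $\G$ in which every loop-edge has been subdivided. First I would run the same preliminary reduction used in the proof of Proposition~\ref{spe}: after a finite \'etale base change $\phi$ acquires sections, while the special fiber, and hence $\G$, $\tau(\sigma)$ and $\rs_{\G}(\tau(\sigma))$, are unchanged; moreover the canonical map $\Pic\X\la\Picphi(B)$ becomes surjective, so I may assume $\sigma=\sigma_{\L}$ for some $\L\in\Pic\X$. Put $D_0:=\tau(\sigma)=\mdeg L_0\in\Div(\G)$.

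The geometric heart is the construction of a suitable model. Fix any integer $n\geq 1$ and, after shrinking $B$ around $b_0$, let $\epsilon:B'\to B$ be a finite morphism totally ramified of order $n+1$ over $b_0$ and \'etale elsewhere, with $\epsilon(b_0')=b_0$. The fiber product $\X\times_B B'$ is regular away from the nodes of its special fiber, where it acquires $A_n$-singularities (locally $xy=s^{n+1}$); let $\w\X\to\X\times_B B'$ be the minimal resolution. Standard semistable reduction at each node shows that $\w\phi:\w\X\to B'$ is a regular one-parameter smoothing whose special fiber $\w{X_0}$ is reduced and nodal, obtained from $X_0$ by inserting a chain of $n$ rational curves at each node; equivalently, the dual graph of $\w{X_0}$ is the refinement $\G^{(n)}$ of $\G$ in which every edge is subdivided by $n$ vertices. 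Pulling $\L$ back along $\w\X\to\X\times_B B'\to\X$ produces $\w\L\in\Pic\w\X$ whose restriction to $\w{X_0}$ has degree $0$ on each exceptional rational curve (these are contracted) and degree $\deg_{C_v}\L$ on the strict transform of each $C_v$ (a birational image of $C_v$); hence $\mdeg\w\L_0=\iota_{\G,\G^{(n)}}(D_0)$.

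I would then apply Proposition~\ref{spe} to $\w\phi$ and the section $\sigma_{\w\L}\in\Pic_{\w\phi}(B')$, obtaining a neighborhood $\w U\ni b_0'$ with $r(\w X_{b'},\w\L_{|\w X_{b'}})\leq r_{\G^{(n)}}(\iota_{\G,\G^{(n)}}(D_0))$ for all $b'\in\w U\smallsetminus\{b_0'\}$. For such $b'$ one has $\epsilon(b')=:b\neq b_0$, and since $\epsilon$ is \'etale and the resolution trivial away from $b_0'$, there is an isomorphism $\w X_{b'}\cong X_b$ carrying $\w\L_{|\w X_{b'}}$ to $L_b=\sigma(b)$; thus the left-hand side equals $r(X_b,\sigma(b))$. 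On the right-hand side, the graph $\G^{(n)}$ is obtained from the loopless graph $\G^{\underline{n}}$ (with $\underline n=(n,\dots,n)$) by subdividing the non-loop edges, so the invariance of the rank under refinement of a loopless graph (Remark~\ref{HKN}, Remark~\ref{invloop}, and \cite[Theorems 1.3 and 1.5]{luo}) identifies $r_{\G^{(n)}}(\iota(D_0))$ with $\rs_{\G}(D_0)$. Taking $U\subset B$ a neighborhood of $b_0$ with $\epsilon^{-1}(U)\subset\w U$ yields (\ref{mixedl}).

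I expect the main obstacle to be the geometric step: producing the regular model $\w\phi$ and verifying that its special fiber is reduced with dual graph exactly $\G^{(n)}$, and that the pulled-back bundle has multidegree $\iota_{\G,\G^{(n)}}(D_0)$. The passage between the loop-only refinement $\G^{\underline{n}}$ used to define $\rs_{\G}$ and the all-edges refinement $\G^{(n)}$ produced by the ramified base change is routine given the cited invariance results, but should be invoked with care to ensure the ranks genuinely agree.
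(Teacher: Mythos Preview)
Your proposal is correct and follows a genuinely different, and in fact more streamlined, route than the paper's own proof.

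The paper proceeds in two stages: it first blows up $\X$ at the $l$ loop-nodes of $X_0$, producing a family $\widehat{\X}\to B$ whose special fiber has $l$ exceptional $(-1)$-curves of multiplicity~$2$; it then performs a degree-$2$ base change ramified over $b_0$, normalizes to obtain a family $\Y\to B^1$ whose special fiber has dual graph $\hG=\G^{\underline{1}}$, and finally resolves the $A_1$-singularities that this base change creates at the \emph{non}-loop nodes, arriving at a regular smoothing $\ZZ\to B^1$ with dual graph an asymmetric refinement $\G_Z$ of $\hG$. The chain of rank equalities $\rs_{\G}(D)=r_{\hG}(\iota(D))=r_{\hG^{(1)}}(\iota(D))=r_{\G_Z}(\iota(D))$ is then invoked before applying Proposition~\ref{spe} to $\ZZ$.

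You bypass the blow-up step entirely by taking a single degree-$(n+1)$ ramified base change and resolving the resulting $A_n$-singularities at \emph{all} nodes simultaneously, landing directly on a regular smoothing with dual graph the uniform refinement $\G^{(n)}$. This is cleaner geometrically: one construction instead of four, and the multidegree computation for the pulled-back line bundle is immediate since every exceptional curve is contracted. The trade-off is that your rank identification $r_{\G^{(n)}}(\iota(D_0))=\rs_{\G}(D_0)$ goes through the non-uniform refinement $\G^{\underline{n}}\to\G^{(n)}$ rather than through $\hG^{(1)}$, but this is exactly the same type of invariance the paper itself needs (and cites to \cite{luo}) for the step $r_{\hG^{(1)}}=r_{\G_Z}$; neither argument stays within the purely uniform case of Remark~\ref{HKN}. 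Your preliminary reduction to $\sigma=\sigma_{\L}$ via an \'etale base change is also well chosen, as it gives you an actual line bundle to pull back to $\w\X$ rather than having to chase sections of relative Picard schemes through the resolution.
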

\begin{proof}
Let $l$ be the number of loops of $\G$.
As usual, we shall identify the edges of $\G$ with the nodes of $X_0$.

Let $\widehat{\X}\to \X$ be the blow up at all the $l$ loops of $\G$.
By the regularity assumption on $\X$, every exceptional divisor  of this blow-up is a
$(-1)$-curve  of $\widehat{\X}$, appearing with multiplicity $2$ in the fiber  which contains it.
Hence the family of curves $\widehat{\phi}:\widehat{\X}\to B$
has   non-reduced fiber over $b_0$.

We now apply the same construction as 
\cite[Sect. III.9]{BPV} to obtain a family of nodal curves with regular total space.
Let $B^1\to B$ be a degree-2 covering ramified over $b_0$ and let 
$\widehat{\X}^1\to B^1$ be the base change of $\widehat{\phi}$.
Let $\Y\to \widehat{\X}^1$ be the normalization, so that we have a family
$\Y \to B^1$ of curves with nodal special fiber.
The dual graph of the special fiber of $\Y \to B^1$ is the refinement $\hG$ of
$\G$ obtained by adding one vertex in the interior of every loop edge of $\G$.
For further use, we denote by $\{e_i^+, e_i^-, i=1,\ldots,l\}\subset E(\hG)$ these new edges  replacing the loops
of $\G$.

Now, $\Y$ has an $A_1$-singularity at every  edge of $\hG$ other than the
$2l$ edges $\{e_i^+, e_i^-\}$, and no other singularity
(\cite[proof of Prop.(III.9.2)]{ BPV}).
Let 
$\ZZ\to \Y$ be the resolution of  all such $A_1$ singularities; now $\ZZ$ is a regular surface
and $\ZZ\to B^1$ is a regular smoothing of its special fiber.

Denote by $\G_Z$ the dual graph  of the special fiber of $\ZZ$;
then $\G_Z$ is a refinement of $\hG$ and hence of $\G$.
By construction, the edges $e_i^+, e_i^- \in E(\hG)$ are not refined in $\G_Z$, so they correspond to edges of $\G_Z$. We 
abuse notation setting
$$
\{e_i^+, e_i^-, \  i=1,\ldots,l\}\subset E(\G_Z).
$$
Finally, we introduce the refinement, $\hG^{(1)}$ of $\hG$ obtained by adding one vertex in the interior of every
edge; of course, $\hG^{(1)}$  is also a refinement of $\G_Z$.
We have a sequence of    maps
$$
\Div \G\stackrel{\iota }{\la} \Div \hG \stackrel{}{\la} \Div \G_Z \la \Div \hG^{(1)}.
$$
The following picture helps to  keep track of the above set-up.
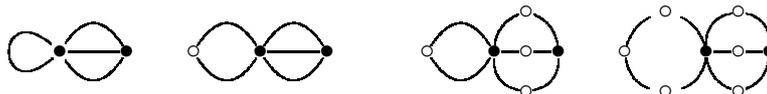
\begin{figure}[h]
\begin{equation*}
\xymatrix@=.5pc{
 &&&&&&&&&&&&&&&&*{\circ}\ar@{-}@/^.3pc/[dr]\ar@{-}@/_.3pc/[dl]
  &&&&
 \circ\ar@{-}@/_.3pc/[dl]\ar@{-}@/^.3pc/[dr]
&&
 *{\circ}\ar@{-}@/^.4pc/[dr]\ar@{-}@/_.4pc/[dl]
  \\
&&\ar@{-}@(ul,dl)*{\bullet}\ar@{-}[rr]\ar @{-} @/_.9pc/[rr] \ar@{-} @/^.9pc/[rr]
&&*{\bullet} 
&&
*{\circ} \ar @{-} @/_.9pc/[rr] \ar@{-} @/^.9pc/[rr]
&&*{\bullet}\ar@{-}[rr]\ar @{-} @/_.9pc/[rr] \ar@{-} @/^.9pc/[rr]
&& *{\bullet} 
 &&& 
*{\circ} \ar@{-} @/_.9pc/[rr] \ar@{-} @/^.9pc/[rr]
&&*{\bullet}\ar@{-}[r]&*{\circ}\ar@{-}[r]& *{\bullet}
&&
*{\circ}&&*{\bullet}\ar@{-}[r]&*{\circ}\ar@{-}[r]& *{\bullet} \\
&&&&&&&&&&&&&&&&*{\circ}\ar@{-}@/^.4pc/[ul]\ar@{-}@/_.4pc/[ur]
&&&&\circ\ar@{-}@/^.4pc/[ul]\ar@{-}@/_.4pc/[ur]
 &&
 *{\circ}\ar@{-}@/^.3pc/[ul]\ar@{-}@/_.3pc/[ur]\\
}
\end{equation*}
\caption{Refinements  from left to right: $ \Gamma, \widehat{\Gamma}, \Gamma_Z, \widehat{\Gamma}^{(1)}$. The original vertices are drawn as $\bullet$, the others as $\circ$.}
\end{figure}

Let $D\in \Div \G.$
We have  
$$
\rs_{\G}(D)=r_{\hG}(\iota (D))=r_{\hG^{(1)}}({\iota_{\hG,\hG^{(1)}}}(\iota (D)))=
r_{\G_Z}(\iota_{\hG,\G_Z}(\iota (D)))=r_{\G_Z}(\iota_{\G,\G_Z}(D)).
$$
Indeed, the first ``$=$" is the definition of $\rs$; the second follows from Remark~\ref{HKN}
(the refinement $\hG^{(1)}$ of $\hG$ adds one vertex in the interior of every
edge); 
  the third is the  invariance  mentioned in \ref{invloop}
  ($\hG^{(1)}$ adds one vertex in the interior of every edge $e_i^+$ and $e_i^-$
  of $\G_Z$);
  the last   is  $\iota_{\hG,\G_Z} \circ \iota =
\iota_{\G,\G_Z} $.

Therefore we have reduced our statement to the loop-free situation for the family
$\ZZ\to B^1$, which has been proved in Proposition ~\ref{spe}.
\end{proof}

\section{On the emptyness of  Brill-Noether loci}
\label{BNsec}
From now on we
shall assume $g\geq 2$, as, by the Riemann-Roch Theorem, 
 the content of this section is  interesting only in this case.
  For an algebraic curve $C$ one defines the ``Brill-Noether variety" of $C$  as 
follows: $W^r_d(C)=\{L\in \Pic^dC: r(C,L)\geq r\}$.

Consider the Brill-Noether number
$ 
\rho^r_d(g):=g-(r+1)(g -d+r).
$  
We   recall two fundamental theorems about  $W^r_d(C)$.
The ``Existence Theorem"
due to Kempf \cite{kempf} and Kleiman-Laksov,  \cite{KL1} \cite{KL2}; see also \cite[Thm. (1.1) Chapt. V]{ACGH}:
\begin{nota}{\bf Existence Theorem.}
\label{Existence}
 {\it If $\rho^r_d(g)\geq  0$ then for   every smooth projective curve $C$ of genus $g$
we have
  $W^r_d(C)\neq \emptyset$. Moreover, if $r\geq d-g$, then every irreducible component of
  $W^r_d(C)$ has dimension at least $\rho^r_d(g)$.}
  \end{nota}

%\label{Existrk}
The assumption $r\geq d-g$   above   and in Theorem~\ref{BNthm} below is needed simply because if $r< d-g$ then, by Riemann-Roch,
$W^r_d(C)=\Pic^dC$ and hence $\dim W^r_d(C) =g<\rho^r_d(g)$.
%\end{remark}

\

Next is  the ``Brill-Noether  Theorem"
proved by Griffiths-Harris \cite{GH}; see \cite[Thm. (1.5) Chapt. V]{ACGH}:
\begin{nota}{\bf Brill-Noether    Theorem.}
\label{BNthm}
 {\it If $\rho^r_d(g)< 0$ then for a general smooth projective curve $C$ of genus $g$
we have
  $W^r_d(C)= \emptyset$. Moreover if $r\geq d-g$ then every irreducible component of
  $W^r_d(C)$ has dimension equal to $\rho^r_d(g)$.}
  \end{nota}
 The word ``general" above means:
for every $C$ in a nonempty Zariski open subset 
of the moduli space of smooth curves of genus $g$.

We now investigate whether analogous results hold for graphs.
Let $\G$ be a graph of genus $g$; set
$$
W^r_d(\G):=\{[D]\in \Pic^d \G: \  \rs_{\G}(D)\geq r\}.
$$
The following Theorem~\ref{BN+} proves conjecture 
\cite[Conj. 3.9 (1)]{bakersp} (and also \cite[Conj. 3.10 (1)]{bakersp}).

\begin{thm}[Existence Theorem for graphs]
\label{BN+}
Let $g,d,r$ be integers such that $\rho^r_d(g)\geq 0$.
Then for every graph
$\G$  of genus $g$ we have $W^r_d(\G)\neq \emptyset$.
\end{thm}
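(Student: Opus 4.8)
The plan is to deduce the existence of a point in $W^r_d(\G)$ from the algebro-geometric Existence Theorem via the refined Specialization Lemma (Proposition~\ref{speloop}). The strategy is to realize the given graph $\G$ as the dual graph of the special fiber of a regular one-parameter smoothing $\phi:\X\to B$, so that the general fiber $X_b$ is a smooth projective curve of genus $g$, and then transport a divisor class realizing $W^r_d(X_b)\neq\emptyset$ back down to $\G$.

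First I would construct a suitable family. One needs a nodal curve $X_0$ whose dual graph is exactly $\G$, together with a regular one-parameter smoothing $\phi:\X\to B$ of $X_0$ with $X_0=\phi^{-1}(b_0)$ and smooth fibers elsewhere. This is a standard smoothability statement: attach to each vertex $v\in V(\G)$ a smooth component $C_v$ (of suitable genus, choosing rational components suffices since $\G$ already carries the genus as its first Betti number) and glue them at nodes according to $E(\G)$; such a nodal curve has arithmetic genus $g$ and admits a smoothing whose total space can be taken regular, e.g.\ by resolving $A_n$-singularities as in the construction used in the proof of Proposition~\ref{speloop}. The general fiber $X_b$ is then a smooth projective curve of genus $g$.

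Next I would apply the Existence Theorem. Since $\rho^r_d(g)\geq 0$, the Existence Theorem gives $W^r_d(X_b)\neq\emptyset$ for every smooth fiber $X_b$; choose $b$ in the open set $U$ furnished by Proposition~\ref{speloop} (with $b\neq b_0$) and pick $L_b\in\Pic^d X_b$ with $r(X_b,L_b)\geq r$. The class $L_b$ extends to a section $\sigma\in\Picphi(B)$ of the relative Picard scheme over (a neighborhood of $b_0$ in) $B$ with $\sigma(b)=L_b$: here one uses either that after a finite étale base change $\phi$ has sections so that $\Pic\X\to\Picphi(B)$ is surjective, or simply that any line bundle on a fiber spreads out over a generization. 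Setting $D:=\tau(\sigma)\in\Div(\G)$, which has degree $d$ by Remark~\ref{deg0}, the mixed-semicontinuity inequality (\ref{mixedl}) gives
\begin{equation*}
r\leq r(X_b,\sigma(b))\leq \rs_{\G}(\tau(\sigma))=\rs_{\G}(D),
\end{equation*}
so the class $[D]$ lies in $W^r_d(\G)$, proving it is non-empty.

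The main obstacle is the spreading-out step: one must guarantee that the chosen class $L_b\in\Pic^d X_b$ on a single nearby fiber actually arises as $\sigma(b)$ for a genuine section $\sigma$ of $\pi:\Picphi\to B$ defined near $b_0$, since (\ref{mixedl}) only constrains ranks along such global sections and the comparison to $\rs_\G(\tau(\sigma))$ requires $\sigma$ to specialize at $b_0$. I would handle this by passing to a finite étale cover as in the Claim inside the proof of Proposition~\ref{spe} (which does not change $\G$ nor the rank on the relevant fibers), thereby forcing $\phi$ to admit sections and hence $\Pic\X\to\Picphi(B)$ to be surjective by \cite[Ch.~8, Prop.~4]{BLR}; then $\sigma=\sigma_{\L}$ for some $\L\in\Pic\X$ restricting to a chosen extension of $L_b$. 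The remaining points—that the smoothing can be arranged with regular total space and that $\tau(\sigma)$ has degree $d$—are routine given the machinery already set up.
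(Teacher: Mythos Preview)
The spreading-out step is a genuine gap, and neither of your two proposed justifications does what you need. The surjectivity of $\Pic\X\to\Picphi(B)$ (granted once $\phi$ has a section) says only that every \emph{global section} $\sigma$ of $\pi$ is of the form $\sigma_{\L}$ for some $\L\in\Pic\X$; it says nothing about whether a prescribed class $L_b\in\Pic^d X_b$ on a single closed fiber lies in the image of the restriction map $\Picphi(B)\to\Pic^d X_b$. In fact this restriction map is typically \emph{not} surjective: when $\phi^*$ is non-isotrivial, $\Pic^0_{\phi^*}(B^*)$ is the Mordell--Weil group of the generic fiber, which is countable by Lang--N\'eron, while $\Pic^0 X_b$ is a positive-dimensional abelian variety. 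Your second option, ``spreads out over a generization,'' is in the wrong direction: by smoothness of $\Picphi$ you can extend $L_b$ to a section over an (\'etale) neighborhood of $b$, but you need a section defined over a neighborhood of $b_0$, and there is no mechanism to propagate from a general closed point to the special point across $B$. There is also a circularity: the neighborhood $U$ in Proposition~\ref{speloop} depends on $\sigma$, so you cannot pick $b\in U$ before $\sigma$ exists, and for a section built from a single $L_b$ you have no control over $r(X_{b'},\sigma(b'))$ at other $b'\in U$.

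The paper resolves exactly this difficulty by working with the \emph{relative} Brill--Noether scheme $W^r_{d,\phi}\subset\Pic^d_\phi$ instead of a single fiber: since its fibers over $B^*$ are all nonempty (by the Existence Theorem) one gets a multisection, i.e.\ a genuine section after a finite covering $B^1\to B$ totally ramified over $b_0$. That base change, however, is not \'etale, so the pulled-back family is no longer regular; one must normalize and resolve the resulting $A_n$-singularities, which replaces $\G$ by its refinement $\G^{(n)}$, and then invoke the rank-preservation under refinement (Remark~\ref{HKN}) to come back to $\G$. Your sketch bypasses all of this, and without it the argument does not close.
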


\begin{proof}
%Since the contraction of an edge adjacent to a $1$-valent vertex   preserves the rank, 
%we can assume that $\G$ has no vertex of valency $1$.
For later use, observe that we will prove that if $r,d,g$ are such that $W^r_d(C)$ is non-empty for a general smooth curve of genus $g$,
then $W^r_d(\G)\neq \emptyset$ for every graph $\G$ of genus $g$.

Notice also that if $r<d-g$ the result is trivial, by Riemann-Roch.

Let $X_0$ be  a general nodal curve whose dual graph is $\G$. 
Fix a regular  one-parameter 
smoothing $\phi:\X \to B$ of $X_0$;
 the existence of such a one-parameter smoothing is well known: it follows, for example, from
 \cite[Prop 1.5, pp. 81,82]{DM}. Moreover,  a general one-parameter smoothing of $X_0$
 will be a regular one.
Denote by $b_0$ the special point of $B$;
we can work up to replacing $B$ with an open neighborhood of $b_0$.
Furthermore, we choose $\phi$ so that it has a section (which we can do up to \'etale base change).

By the   Existence Theorem~\ref{Existence} for curves, 
 the assumption $\rho^r_d(g)\geq 0$ implies that for every curve $C$ of genus $g$
we have
  $W^r_d(C)\neq \emptyset$.

Now, by  \cite[Sect. 2]{AC}
(or   \cite[Ch. 21, Sect. 3]{gac}),
for any family of smooth projective curves $\psi{:\mathcal C}\to B$    admitting    a section,
there exists a $B$-scheme $ W^r_{d,\psi}\to B$ 
whose fiber over every $b\in B$   is
 $W^r_d(C_b)$.
Moreover there is a natural injective morphism of $B$-schemes,
$$W^r_{d,\psi}\ha  \Pic_{\psi}
$$ which we view as an inclusion. We want to use  this construction for our one-parameter smoothing $\phi$ of   $X_0$, but $\phi$ admits a singular fiber.
Since the restriction $\phi^*:\X^*\to B^*=B\smallsetminus \{b_0\}$ is a family of smooth curves, the above set-up gives the relative Brill-Noether variety
$W^r_{d,\phi^*}\to B^*$. We let 
$\overline{W^r_{d,\phi}}\to B$ be the closure of $W^r_{d,\phi^*}$ in 
the compactified Picard scheme 
$\overline{P_{\phi}^d}\to B$; 
recall that the restriction of $\overline{P_{\phi}^d}$ over $B^*$ coincides with $\Pic^d_{\phi ^*}$,
and
  every point of its fiber  
over $b_0$   corresponds to
a line bundle on a partial normalization of $X_0$;
see \cite{cner} for details. 
We define $W^r_{d,\phi}$ as the intersection of 
$\overline{W^r_{d,\phi}}$ with $\Pic^d_{\phi}$.
By the above discussion, one obtains that this $B$-scheme $W^r_{d,\phi}\to B$
has non empty fiber over $b_0$, and, by the upper-semicontinuity of $h^0$,
this fiber   is contained in $W^r_d(X_0)$.
We simplify the notation and write
$$
W_{\phi}:=W^r_{d,\phi}\la B.
$$
There exists a finite  covering $\delta:B^1\to B$, totally ramified over $b_0$,
such that the base change
 $W_{\phi}\times _BB^1\la B^1$ 
admits a section. 
Now, denote by $\phi^1:\X^1\to B^1$ the base change of $\phi$; notice that the dual graph of the special fiber of $\phi^1$ is again $\G$.
We have
$$W_{\phi^1}=W_{\phi}\times _BB^1\la B^1,$$
as compatibility with base change holds; see \cite[Ch. 21, Sect. 3]{gac}.
We denote by $\sigma:B^1\to W_{\phi^1}$ the section   mentioned above.

The family of curves $\phi^1$ is no longer a regular smoothing of its special fiber;
the situation we are about to describe is detailed in \cite[Sect. III.9]{BPV}.
We denote by $\widetilde{\X^1}\to \X^1$ the normalization of $\X^1$
and by 
$\Y\to \widetilde{X^1}$ its resolution of singularities.
More precisely, if,  locally at $b_0$, the covering $\delta$ has the form $t\mapsto t^{n+1}$,
then the surface  $\widetilde{\X^1}$ has a singular point of type $A_{n}$ at every node of its special fiber. Therefore the map $\Y\to \widetilde{X^1}$ replaces every node of the special fiber by a chain of
$n$ exceptional components.
Denote by
$$
\chi:\Y \la B^1
$$
the family over $B^1$ obtained by composing $\Y\to \widetilde{X^1}\to \X^1\to B^1$.
Now $\chi$ is a regular smoothing of its special fiber,   $Y_0$.
The dual graph of $Y_0$ is obtained from the dual graph, $\G$, of the special fiber of $\phi^1$,
by inserting $n$ vertices in the interior of every edge.
Hence we denote by $\G^{(n)}$ the dual graph of $Y_0$.
We have a natural   map
$$
\iota=\iota_{\Gamma, \G^{(n)}}:\Div(\G)\la \Div(\G^{(n)})
$$
and this map preserves the rank, i.e. $\rs_{\G}(D)=\rs_{\G^{(n)}}(\iota (D))$
for every $D\in \Div(\G)$, by Remark~\ref{HKN}.
Now, we have a commutative diagram
\begin{equation}\label{diag1}
\xymatrix{
\Y \ar@{^{}->}[r]^{\beta}  \ar@{->}[d]_{\chi} & \X^1\ar@{->}[d]^{\phi^1} \\
B^1\ar@{=}[r]   &B^1
}
\end{equation}
and an associated $B^1$-map $\Pic_{\phi^1}\to \Pic_{\chi}$, hence
also a
  map
$$
\beta^*:\Pic_{\phi^1}(B^1)\la \Pic_{\chi}(B^1).
$$
The previously defined section  $\sigma:B^1\to W_{\phi^1}$ is an element of $\Pic_{\phi^1}(B^1)$,
so that $\beta^*(\sigma)\in \Pic_{\chi}(B^1)$.
By  Proposition~\ref{speloop} applied to $\chi$ and $\beta^*(\sigma)$, we have for every $b\in B^1$, $b\neq \delta^{-1}(b_0)$
$$
r\leq r\Bigr(Y_b, \beta^*(\sigma) (b)\Bigl)\leq \rs_{\G^{(n)}}\Bigr(\tau \bigr(\beta^*(\sigma)\bigl)\Bigl).
$$
On the other hand, by construction,
the divisor $\tau (\beta^*(\sigma))\in \Div (\G^{(n)})$ corresponds to
 $\tau(\sigma)\in \Div (\G)$ under the refinement map
 $\iota:\Div \G \to \Div \G^{(n)}$;
 in symbols 
$$
\tau (\beta^*(\sigma))=\iota(\tau(\sigma)).
$$
Since $\iota$ preserves the ranks, we obtain
$$
r\leq \rs_{\G^{(n)}}(\tau (\beta^*(\sigma)))= \rs_{\G^{(n)}}(\iota(\tau(\sigma)))=\rs_{\G}(\tau(\sigma)). 
$$
Hence we have $\tau(\sigma)\in W^r_d(\G)$. We have thus proved that $W^r_d(\G)$ is not empty, so we are done.
\end{proof}

Although the previous theorem is purely graph theoretic, our proof
uses algebraic geometry. So we wish to propose the following problem.

\begin{prob}
Find a purely combinatorial proof for Theorem~\ref{BN+}.
\end{prob}

We now turn to the Brill-Noether Theorem \ref{BNthm}. There is an important difference with the Existence Theorem, namely the  Brill-Noether Theorem is valid for a  general curve, and is well known to fail 
for some particular curves (for example, hyperelliptic curves of genus at least 3).
This difference reflects itself in the subsequent discussion.

\begin{fact}
\label{fact}
(\cite[Conjecture 3.9 (2)]{bakersp} - \cite[Theorem 1.1]{CDPR})
Assume     $\rho^r_d(g)<0$.
 There exists a graph of $\G$ of genus $g$ such that $W^r_d(\G)=\emptyset$.
 \end{fact}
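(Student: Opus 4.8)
The plan is to deduce this existence statement for graphs from the corresponding algebro-geometric fact, namely the Brill-Noether Theorem~\ref{BNthm}, by running the specialization machinery in reverse. The key conceptual input is the observation made at the start of the proof of Theorem~\ref{BN+}: Proposition~\ref{speloop} gives, for a general nodal curve $X_0$ with dual graph $\G$ sitting in a regular one-parameter smoothing $\phi:\X\to B$, the inequality $r(X_b,\sigma(b))\leq \rs_{\G}(\tau(\sigma))$ for $b$ near the special point. Here however I want to use this semicontinuity in the contrapositive direction: if I can produce a graph $\G$ on which \emph{every} divisor class of degree $d$ has rank strictly less than $r$, I want to conclude the classical Brill-Noether Theorem; but the present \textbf{Fact} asks for the converse flow, producing the graph from the curve. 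The cleanest route, and the one the cited reference \cite{CDPR} makes available, is to exhibit the graph directly, so my plan has two layers.

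First I would recall that \cite{CDPR} (Theorem 1.1 there) constructs an explicit \emph{chain of loops} $\G$ of genus $g$ --- a sequence of $g$ cycles joined in a line, with generic (or suitably chosen) edge lengths --- and proves by a purely combinatorial rank computation, via the theory of reduced divisors and Dhar's burning algorithm, that this metric graph carries no divisor of degree $d$ and rank $\geq r$ whenever $\rho^r_d(g)<0$. Since the statement we must prove is about a \emph{non-metric} graph and the rank $\rs_{\G}$, the task reduces to transferring that metric result to the combinatorial setting. The bridge is exactly the rank-invariance under refinement recorded in Remark~\ref{HKN} and Remark~\ref{invloop}: for a loopless graph (or after handling loops through the $\G^{\underline{n}}$ construction of Section~\ref{loopsec}), the integer rank $\rs_{\G}(D)$ agrees with the tropical rank of the associated metric graph with all edge lengths equal, by \cite[Corollary 22]{HKN} and \cite[Thm 1.3]{luo}.

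Concretely, the steps I would carry out are: (1) take the chain-of-loops graph $\G$ from \cite{CDPR} as an underlying combinatorial graph; (2) invoke the specialization-of-rank results to identify $\rs_{\G}(D)$ with the rank of $\iota(D)$ on an equal-length refinement, and hence with the metric-graph rank treated in \cite{CDPR}; (3) quote that for $\rho^r_d(g)<0$ there is no class of degree $d$ and rank $\geq r$ on that metric graph, so $W^r_d(\G)=\emptyset$. A point to handle carefully is the presence of loops in the chain of cycles: each cycle is combinatorially a loop-type subgraph, so I must pass through the $\rs$-definition of Section~\ref{loopsec}, subdividing each loop-edge, exactly as in the proof of Proposition~\ref{speloop}, before the \cite{HKN}--\cite{luo} invariance applies.

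The main obstacle I anticipate is not any single deep step but the bookkeeping of translating between the three rank notions in play --- the rough $r_{\G}$, the loop-sensitive $\rs_{\G}$, and the metric/tropical rank of \cite{CDPR} --- and verifying that the emptiness of $W^r_d$ is genuinely preserved under each refinement and specialization, so that the non-metric statement in the \textbf{Fact} is the faithful shadow of the metric theorem. In other words, the real content has been done in \cite{CDPR}; the work here is to confirm that $\rs_{\G}$ is the correct invariant for which their computation transfers verbatim, which follows once the refinement-invariance of Remark~\ref{HKN} and Remark~\ref{invloop} is applied to the explicit graph.
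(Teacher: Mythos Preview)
Your proposal is correct and matches the paper's treatment: the Fact is not proved in the paper at all but is cited from \cite{CDPR}, with the subsequent Remark extracting the explicit non-metric graph---a chain of $g$ cycles each with $2g-1$ vertices, consecutive cycles sharing one vertex---to which the CDPR metric result transfers. One simplification: since each cycle has $2g-1\geq 3$ vertices (recall $g\geq 2$), this graph is loop-free, so $\rs_\G=r_\G$ and your planned detour through the loop-resolution machinery of Section~\ref{loopsec} is unnecessary.
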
 
\begin{remark}
Theorem 1.1 of \cite{CDPR} is actually a stronger result, from which the above fact follows.
In particular, the authors obtain the following. Let $\G$ be a chain of $g$ cycles
$\Delta _1,\ldots \Delta _g$ each of which has $2g-1$ 
cyclically ordered vertices $V(\Delta_i)=\{v_1^i,\ldots, v_{2g-1}^i\}$
and such that
$$
v_{2g-1}^1=v_1^2,\quad v_{2g-1}^2=v_1^3,\  \ldots, \  v_{2g-1}^i=v_1^{i+1},\  \ldots, \  v_{2g-1}^{g-1}=v_1^g,
$$
with no other identifications. Then $W^r_d(\G)=\emptyset$ if $\rho^r_d(g)<0$.
\end{remark}
Recall that a graph  is called $3$-{\it regular} if all of its vertices have valency 3, and that a 3-regular graph
is 3-connected if  and only if it is 3-edge connected.
\begin{conj}
\label{conj}
Assume $g\geq 2$ and   $\rho^r_d(g)<0$. 
\begin{enumerate}
 \item
 \label{conj1}
There exists   a $3$-regular    graph $\Gamma$ of genus $g$ 
for which  $W^r_d(\G)=\emptyset$.
\item
 \label{conj2}
Let $\G$ be a graph of genus $g$ with the highest number of automorphisms (among graphs of genus $g$). Then $W^r_d(\G)=\emptyset$.
\end{enumerate}
\end{conj}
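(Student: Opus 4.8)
The final statement is Conjecture~\ref{conj}, a conjecture, so there is no proof to reconstruct; instead I will lay out the strategy by which one would hope to establish its two parts.

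\medskip

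The plan is to bootstrap from Fact~\ref{fact}: we already know, via \cite{CDPR}, that \emph{some} graph $\G$ of genus $g$ has $W^r_d(\G)=\emptyset$ when $\rho^r_d(g)<0$, and the witness is an explicit chain of $g$ cycles. For part~(\ref{conj1}) the first thing I would do is examine the chain-of-cycles example and modify it into a $3$-regular graph without destroying the emptiness of $W^r_d$. The chain of cycles is already close to trivalent in its interior, the obstructions being the two endpoints (valency $2$) and the glued vertices $v_{2g-1}^i=v_1^{i+1}$ (valency $4$). The natural move is to attach small trees or extra handles to correct the valencies while leaving the genus fixed, then argue that the divisorial rank $\rs_{\G}$ is not decreased by these local modifications. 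The technical heart here is a \emph{monotonicity} statement: adding a $3$-regularizing gadget at a vertex can only enlarge, never shrink, the set of divisor classes of rank $\geq r$, so emptiness is preserved. This should follow from the behaviour of rank under refinements (Remark~\ref{HKN}) together with a direct chip-firing argument on the modified graph.

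\medskip

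For part~(\ref{conj2}) the approach is entirely different and, I expect, considerably harder. Here one fixes the genus $g$ and selects the graph $\G$ maximizing $|\Aut(\G)|$, and must show $W^r_d(\G)=\emptyset$ whenever $\rho^r_d(g)<0$. The idea I would pursue is that a graph with a very large automorphism group should be ``Brill--Noether general'' because high symmetry forces the divisor classes to be spread out evenly, mirroring the algebro-geometric heuristic that a general curve has no unexpected linear series. Concretely, one would hope to use the transitivity of $\Aut(\G)$ on vertices (or on edges) to reduce the computation of $\rs_{\G}(D)$ to a small number of orbit representatives, and then bound the rank of each representative below the threshold $r$. Via Proposition~\ref{speloop}, a lower bound on $\rs_{\G}(\tau(\sigma))$ transfers to a lower bound on $r(X_b,\sigma(b))$ for a curve $X_0$ with dual graph $\G$; contrapositively, if one could exhibit a single smooth curve of genus $g$ with dual-graph-symmetric degeneration and empty $W^r_d$, the combinatorial emptiness would follow. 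The main obstacle, however, is that there is no known classification of the maximally symmetric genus-$g$ graphs, and the automorphism group alone does not obviously pin down the chip-firing structure; a large symmetry group could in principle coexist with special divisor classes. I would therefore treat part~(\ref{conj2}) as genuinely open and, as a first step, verify it on the cubic graphs of small genus where the maximal-automorphism graph is known explicitly (for instance $K_4$ in genus $3$ and $K_{3,3}$ in genus $4$), using these cases both as evidence and as a guide to the orbit-reduction argument sketched above.

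\medskip

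The step I expect to be the true bottleneck is the monotonicity-under-$3$-regularization claim in part~(\ref{conj1}): while intuitively clear, controlling $\rs_{\G}$ under the insertion of valency-correcting subgraphs requires a careful analysis of how effective representatives and chip-firing moves interact across the new edges, and the reduced definition of rank for graphs with loops (Section~\ref{loopsec}) must be handled with care if any of the gadgets introduce loops or multiple edges.
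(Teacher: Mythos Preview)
You are correct that Conjecture~\ref{conj} is stated as an open conjecture and that the paper offers no proof. The paper follows the statement only with motivational remarks: it explains both parts by analogy with the order-reversing correspondence between the stratifications of $\Mgb$ and $\Mgt$ (general smooth curves correspond to $3$-regular graphs; automorphism-free curves should correspond to maximally symmetric graphs), and it records the related observation that $3$-regular $3$-connected graphs are non-hyperelliptic while $3$-regular graphs of connectivity $2$ can be hyperelliptic. That is the full extent of what the paper does with this statement.

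Your speculative strategies therefore go well beyond the paper, and since there is nothing to compare against, let me just flag one point where your outline for part~(\ref{conj1}) would not work as written. A $3$-regular graph of genus $g$ has exactly $2g-2$ vertices, whereas the chain of cycles from \cite{CDPR} has on the order of $g^2$ vertices; you cannot reach a $3$-regular witness by ``attaching small trees or extra handles'' to that chain, since trees introduce leaves (violating $3$-regularity) and handles raise the genus. Any $3$-regularization at fixed genus would require contracting most of the chain, not decorating it, and your proposed monotonicity of $\rs_{\G}$ under such surgery is not available (indeed the paper notes explicitly that some $3$-regular graphs are hyperelliptic, hence not Brill--Noether general, so local $3$-regularizing moves cannot uniformly preserve emptiness of $W^r_d$). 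Your suggestion for part~(\ref{conj2}) to test $K_4$ and $K_{3,3}$ is reasonable as a sanity check, but the paper does not pursue this either.
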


\begin{remark}
These two conjectures are quite different, but they are both inspired   by the analogies between the moduli space $\Mgb$ of Deligne-Mumford stable curves and the moduli space $\Mgt$ of tropical curves
(see \cite{BMV} and \cite{Ctrop}).  

Both $\Mgb$ and $\Mgt$ admit a partition into strata 
which are indexed by graphs (the dual graphs of stable curves for $\Mgb$, the underlying graphs 
of tropical curves for $\Mgt$); in both cases, the set of  strata
is partially ordered 
under inclusion of closures.
Recall that
the generic points of $\Mgt$,  i.e. the points in the top dimensional strata, 
parametrize  tropical curves whose underlying graph is $3$-regular.
As we said,   for a general point of $\Mgb$, i.e. for a general smooth  curve of genus $g$, the Brill-Noether variety is empty whenever $\rho$ is negative. 
By analogy, we  may ask whether some of the   generic (from the tropical point of view) graphs,
i.e. the 3-regular graphs, have an empty Brill-Noether locus when  $\rho$ is negative. 
This explains part (\ref{conj1}).
Finally, notice that there do exist 3-regular graphs that are not Brill-Noether general (see the next remark); hence if (\ref{conj1}) holds, it would be interesting to characterize the graphs that satisfy it.

Next, to motivate the second part, we recall that by \cite[Thm. 4.7]{Ctrop}, the 
natural bijection between the partitions of $\Mgb$ and  
$\Mgt$ 
is order reversing.
This suggests that a general point of $\Mgb$  corresponds to a special point
of $\Mgt$. Let us focus on the top dimensional strata: smooth curves on   one side,
3-regular graphs on the other side.
It is well known that
a general smooth curve  in $\Mgb$ has no nontrivial automorphisms, hence, with the above  ``reversion" phenomenon in mind,
we may think of a $3$-regular graph  with the greatest number of symmetries as its
analog.   Since, as we said,   a general curve  has  empty Brill-Noether locus when  $\rho$ is negative
we can ask whether the same holds for the analogous graphs.
\end{remark}
\begin{remark}
 Suppose $g\geq 3$.  
A  $3$-regular and  $3$-connected graphs is known   to be non-hyperelliptic.
Morover, for every $g$ there exists a $3$-regular hyperelliptic graph  of connectivity $2$ (such graphs are, of course, not Brill-Noether general).
\end{remark}

In the next proposition, the new fact with respect to the tropical proof of the Brill-Noether Theorem
 of \cite{CDPR}  is   that it suffices to have an ordinary graph,
rather than a tropical curve,   for which the Brill-Noether
locus is empty (and checking the emptyness of $W^r_d(\G)$ for a graph $\G$
is a finite amount of work).

\begin{prop}
 \label{BNBN}
Suppose that for some integers 
$d,g,r$ there exists a graph $\G$ of genus $g$ such that $W^r_d(\G)= \emptyset$.
Then $W^r_d(C)=\emptyset$ for  a general
 smooth projective curve $C$ defined  over an algebraically closed field.

In particular, if 
 $d,g,r$ are  such that $\rho^r_d(g)<0$, then there exists such a graph, and hence
 the Brill-Noether Theorem~\ref{BNthm} holds.
\end{prop}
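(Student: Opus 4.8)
The plan is to derive Proposition~\ref{BNBN} as the contrapositive of the auxiliary statement recorded at the start of the proof of Theorem~\ref{BN+}, together with one standard properness fact about Brill--Noether loci.

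First I would record the following dichotomy. The locus
$$
Z := \{[C]\in M_g : W^r_d(C)\neq\emptyset\}\subset M_g
$$
is closed: for any family $\psi:\mathcal{C}\to S$ of smooth projective curves with $S$ dominating $M_g$ and $\psi$ admitting a section, the relative Brill--Noether scheme $W^r_{d,\psi}\to S$ of \cite{AC} is projective over $S$ (it is a determinantal subscheme of the proper relative Picard scheme $\Pic^d_{\psi}$), so its image in $S$ is closed, and this descends to a closed subset of $M_g$. Since $M_g$ is irreducible, either $Z=M_g$ or $Z$ is a proper closed subset. In the former case the general, hence every, smooth curve of genus $g$ satisfies $W^r_d(C)\neq\emptyset$; in the latter case the general curve satisfies $W^r_d(C)=\emptyset$. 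Thus ``$W^r_d(C)=\emptyset$ for a general $C$'' is the exact negation of ``$W^r_d(C)\neq\emptyset$ for a general $C$,'' the latter being equivalent to ``$W^r_d(C)\neq\emptyset$ for every $C$.''

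Next I would invoke the observation isolated at the beginning of the proof of Theorem~\ref{BN+}: if $r,d,g$ are such that $W^r_d(C)\neq\emptyset$ for a general (hence, by the dichotomy, every) smooth curve of genus $g$, then $W^r_d(\G)\neq\emptyset$ for every graph $\G$ of genus $g$. Taking the contrapositive and applying the dichotomy: if there exists a graph $\G$ of genus $g$ with $W^r_d(\G)=\emptyset$, then $W^r_d(C)\neq\emptyset$ cannot hold for a general $C$, whence $W^r_d(C)=\emptyset$ for a general smooth curve $C$. This is the first assertion. For the ``in particular'' clause I would feed in Fact~\ref{fact}: when $\rho^r_d(g)<0$ the chain of cycles of \cite{CDPR} provides a graph $\G$ of genus $g$ with $W^r_d(\G)=\emptyset$, and the first part then yields $W^r_d(C)=\emptyset$ for a general curve $C$, which is precisely the emptiness statement of Theorem~\ref{BNthm}.

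The only non-formal ingredient is the closedness of $Z$ used above; this is where the substantive input enters, since it is what converts ``general'' into ``every'' and legitimizes the contrapositive (indeed, the nearby smooth fibers produced in the proof of Theorem~\ref{BN+} need not be general, so one genuinely needs that the general curve carrying a $g^r_d$ forces every curve to carry one). I therefore expect that paragraph to be the one requiring care, while the rest is a logical rearrangement of the content already established in Theorem~\ref{BN+}.
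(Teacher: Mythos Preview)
Your argument is correct and follows the same route as the paper: take the contrapositive of the observation recorded at the start of the proof of Theorem~\ref{BN+}, then invoke Fact~\ref{fact} for the ``in particular'' clause. The paper's proof is terser and leaves the dichotomy implicit; you are right to spell out that the closedness of $Z=\{[C]:W^r_d(C)\neq\emptyset\}$ (via properness of $W^r_{d,\psi}\to S$) is what turns the bare contrapositive conclusion ``it is not the case that the general curve carries a $g^r_d$'' into the desired ``the general curve carries no $g^r_d$''.
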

\begin{proof}

As we already observed, the proof of Theorem~\ref{BN+} consists in showing that if $W^r_d(C)$ is non-empty for a general curve $C$, then $W^r_d(\G)$ is non empty for every genus $g$ graph.
Hence the first part of the statement follows.

Now, if $\rho^r_d(g)<0$, the existence of a genus $g$   graph $\G$ for which
$W^r_d(\G)=\emptyset$ is proved in \cite{CDPR}, as stated in Fact~\ref{fact}.
Hence we are done.
\end{proof}

\end{document}